\newtheorem{thm}{Theorem}[section]
\newtheorem{thm-con}[thm]{Theorem-Conjecture}
\numberwithin{equation}{section}
\theoremstyle{definition}
\newcommand{\f}{\Bbb F}
\begin{document}

\title[A Note on Permutation Binomials and Trinomials over Finite Fields]{A Note on Permutation Binomials and Trinomials over Finite Fields}

\author[Neranga Fernando]{Neranga Fernando}
\address{Department of Mathematics,
Northeastern University, Boston, MA 02115}
\email{w.fernando@northeastern.edu}

\begin{abstract}
Let $p$ be an odd prime and $e$ be a positive integer. We completely explain the permutation binomials and trinomials arising from the reversed Dickson polynomials of the $(k+1)$-th kind $D_{n,k}(1,x)$ over $\f_{p^e}$ when $n=p^l+2$, where $l\in \mathbb{N}$. 
\end{abstract}

\keywords{Finite field, Permutation polynomial, Binomial, Trinomial, Reversed Dickson polynomial}

\subjclass[2010]{11T06, 11T55}

\maketitle


\section{Introduction}

Let $p$ be a prime and $e$ be a positive integer. Let $\f_{p^e}$ be the finite field with $p^e$ elements. A polynomial $f \in \Bbb F_{p^e}[{\tt x}]$ is called a \textit{permutation polynomial} (PP) of $\Bbb F_{p^e}$ if the associated mapping $x\mapsto f(x)$ from $\f_{p^e}$ to $\f_{p^e}$ is a permutation of $\Bbb F_{p^e}$.

Let $k$ be an integer such that $0\leq k\leq p-1$ and $a\in \f_{p^e}$. The $n$-th reversed Dickson polynomial of the $(k+1)$-th kind $D_{n,k}(a,x)$ is defined by
\begin{equation}\label{E1.1}
D_{n,k}(a,x) = \sum_{i=0}^{\lfloor\frac n2\rfloor}\frac{n-ki}{n-i}\dbinom{n-i}{i}(-x)^{i}a^{n-2i},
\end{equation}

and $D_{0,k}(a,x)=2-k$ ; see \cite{Wang-Yucas-FFA-2012}.

The premutation behaviour of the $n$-th reversed Dickson polynomial of the $(k+1)$-kind $D_{n,k}(a,x)$ over finite fields and its properties were explored by the author of the present paper in \cite{Fernando-2016-3}. It was shown in \cite{Fernando-2016-3} that to discuss the permutation property of $D_{n,k}(a,x)$, one only has to consider $a=1$. The cases $n=p^l$, $n=p^l+1$, and $n=p^l+2$, where $p$ is an odd prime and $l\geq 0$ is an integer, were discussed in \cite{Fernando-2016-3}. The first two cases were completely explained and we list the results of the last case obtained in \cite{Fernando-2016-3} below. 

\textbf{Result 1.} (see \cite[Remark~2.14]{Fernando-2016-3})
Let $p$ be an odd prime, $k=0$, and $l=e$. Then we have
\[
\begin{split}
D_{p^e+2,0}(1,x)&= \frac{1}{2}\,(1-4x)^{\frac{p^e+1}{2}}-x+\frac{1}{2}
\end{split}
\]
which is a PP of $\f_{p^e}$ if and only if $p^e\equiv 1 \pmod{3}$.

\textbf{Result 2.}(see \cite[Theorem~2.15]{Fernando-2016-3})
Let $p$ be an odd prime and $k=2$. Then $D_{p^l+2,k}(1,x)$ is a PP of $\f_{p^e}$ if and only if $l=0$. 

\textbf{Result 3.}(see \cite[Theorem~2.16]{Fernando-2016-3})
Let $p>3$ and $k=4$. Then $D_{p^l+2,k}(1,x)$ is a PP of $\f_{p^e}$ if and only if the binomial $x^{\frac{p^l-1}{2}}-\frac{1}{2}x$ is a PP of $\f_{p^e}$. 

\textbf{Result 4.}(see \cite[Theorem~2.17]{Fernando-2016-3})
Let $p$ be an odd prime, $n=p^l+2$, and $k\neq 0, 2, 4$. Then $D_{n,k}(1,x)$ is a PP of $\f_{p^e}$ if and only if the trinomial $(4-k)\,x^{\frac{p^l+1}{2}}+k\,x^{\frac{p^l-1}{2}}+(2-k)x$ is a PP of $\f_{p^e}$. 

This paper is a result of a question asked by Xiang-dong Hou. He asked the author (private communication) ``when is the trinomial in result 4 a PP of $\f_{p^e}$?''. This paper answers that question completely. 

The paper is organized as follows. 

In Section 2, we present some preliminaries that will be used throughout the paper. In Section 3, we explain a family of permutation trinomials over $\f_{p^e}$ arising from the reversed Dickson polynomials when $p>3$ is odd and $k$ is an integer such that $k\neq 0,2,4$. In Section 4, we explain a family of permutation binomials over $\f_{p^e}$ arising from the reversed Dickson polynomials when $p=3$.


\section{Preliminaries}

In this section, we list some preliminaries that will be useful in latter sections. Let $q=p^e$ in the following two theorems. 

\begin{thm}(Hermite's Criterion, see \cite{Lidl-Niederreiter-97}). $f\in \Bbb F_{q}[x]$ is a permutation polynomial of $\Bbb F_{q}$ if and only if the following two conditions hold:
\begin{enumerate}
\item [(i)]$f^{q-1}\pmod{x^q-x}$ has degree $q-1$;
\item [(ii)]for each integer $s$ with $1\leq s\leq q-2$, $f^s\equiv f_s\pmod{x^q-x}$ for some $f_s\in \Bbb F_{q}[x]$ with deg $f_s\leq q-2$.
\end{enumerate}
\end{thm}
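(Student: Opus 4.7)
The plan is to convert the permutation property of $f$ into a family of power-sum equalities $\sum_{c \in \Bbb F_q} f(c)^s = \sum_{c \in \Bbb F_q} c^s$ and then translate those power sums into statements about coefficients of $f^s \bmod (x^q - x)$. The elementary identity I would lean on is
\[
\sum_{c \in \Bbb F_q} c^j =
\begin{cases}
-1, & j \ge 1 \text{ and } (q-1) \mid j, \\
0, & 0 \le j \le q-1,\ j \ne q-1,
\end{cases}
\]
which, combined with $c^q = c$ for every $c \in \Bbb F_q$, yields $\sum_{c \in \Bbb F_q} g(c) = -a_{q-1}$ for any $g \in \Bbb F_q[x]$, where $a_{q-1}$ is the coefficient of $x^{q-1}$ in $g \bmod (x^q - x)$. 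Both directions of the criterion will fall out of this single reduction device.

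For the ``only if'' direction, if $f$ is a PP then $\{f(c) : c \in \Bbb F_q\} = \Bbb F_q$ as multisets, so $\sum_{c} f(c)^s = \sum_{c} c^s$ for every $s \ge 0$. The right-hand side is $0$ for $1 \le s \le q-2$ and equals $-1$ for $s = q-1$; via the identity above this translates directly into conditions (ii) and (i), respectively. For the converse, I would introduce $N(a) := |\{c \in \Bbb F_q : f(c) = a\}|$ and use the indicator identity $1_{u=0} = 1 - u^{q-1}$ on $\Bbb F_q$ to write
\[
N(a) \equiv \sum_{c \in \Bbb F_q}\bigl(1 - (f(c) - a)^{q-1}\bigr) \pmod{p}.
\]
Expanding the $(q-1)$-st power by the binomial theorem and interchanging the summations produces a linear combination of $\sum_c f(c)^s$ for $0 \le s \le q-1$. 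The $s = 0$ term dies because $\sum_c 1 = q \equiv 0$ in $\Bbb F_q$; condition (ii) annihilates the terms with $1 \le s \le q-2$; and only the $s = q-1$ contribution survives, equal to $A$, the coefficient of $x^{q-1}$ in $\overline{f^{q-1}}$, which is nonzero by (i). Thus $N(a) \equiv A \pmod{p}$ is a nonzero constant independent of $a$, whence $N(a) \ge 1$ for every $a$; combined with $\sum_a N(a) = q$, this forces $N(a) = 1$ uniformly, so $f$ is a PP.

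The main obstacle is the converse: extracting a genuine equidistribution statement from what initially look like unrelated cancellation conditions on different powers of $f$. The key insight is that conditions (ii) are exactly what is needed to kill the ``middle'' of the binomial expansion of $(f(c)-a)^{q-1}$, while condition (i) supplies a nonzero top-degree contribution that, importantly, does not depend on $a$; once this uniformity in $a$ is established, the pigeonhole closure from $\sum_a N(a) = q$ is immediate.
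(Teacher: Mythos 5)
The paper does not prove this statement at all --- it is quoted as a preliminary with a citation to Lidl--Niederreiter --- and your argument is correct and is essentially the standard proof given in that reference: the power-sum identity reduces both directions to reading off the coefficient of $x^{q-1}$ in $f^s \bmod (x^q-x)$, and the converse via $N(a)\equiv\sum_{c}\bigl(1-(f(c)-a)^{q-1}\bigr)\pmod{p}$ together with $\sum_a N(a)=q$ is exactly the classical route. No gaps.
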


\begin{thm} \label{L1} (see \cite{Zieve-2009})
Pick $d, r> 0$ with $d \mid (q-1)$, and let $h\in \f_q[x]$. Then $f(x)=x^r h(x^{\frac{q-1}{d}})$ permutes $\f_q$ if and only if 
\begin{enumerate}
\item $\textnormal{gcd}(r, \frac{q-1}{d})=1$, and
\item $x^r h(x)^{\frac{q-1}{d}}$ permutes $\mu_d$.
\end{enumerate}
\end{thm}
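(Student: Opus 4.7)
The plan is to reduce to permutations of $\f_q^*$ and then exploit the coset structure coming from the subgroup $\mu_s\subset \f_q^*$ of $s$-th roots of unity, where $s=(q-1)/d$. Since $r>0$, we have $f(0)=0$, so it suffices to characterise when $f$ permutes $\f_q^*$. The map $\psi:\f_q^*\to \mu_d$, $x\mapsto x^s$, is a surjective homomorphism with kernel $\mu_s$, and therefore partitions $\f_q^*$ into $d$ fibres (the cosets of $\mu_s$), each of size $s$; this fibration is the scaffolding for the whole argument.

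First I would check that if $f$ permutes $\f_q^*$ then $h$ is nonvanishing on $\mu_d$ (otherwise an entire fibre collapses to $0$), and then compute
\[
\bigl(f(x)\bigr)^s = x^{rs}h(x^s)^s = \zeta^{r}\,h(\zeta)^{s}\quad\text{where }\zeta=x^s.
\]
This gives the factorisation $\psi\circ f = g\circ\psi$ with $g(\zeta)=\zeta^{r}h(\zeta)^{s}$, which exhibits $f$ simultaneously as a map between fibres (governed by $g$) and as a map within each fibre.

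Next I would extract the two conditions from this factorisation. At the fibre level, surjectivity of $\psi$ forces $g$ to be a bijection of $\mu_d$, which is precisely condition (2); conversely, (2) guarantees that $f$ permutes the set of fibres and in particular that $h$ does not vanish on $\mu_d$. Within a single fibre $\psi^{-1}(\zeta)=x_0\mu_s$ the map reads $u\mapsto x_0^{r}h(\zeta)\cdot u^{r}$, which is injective on $\mu_s$ iff $u\mapsto u^{r}$ permutes $\mu_s$, i.e., iff $\gcd(r,s)=1$, giving condition (1). For the converse one assumes (1) and (2), applies $\psi$ to an alleged collision $f(x_1)=f(x_2)$ to conclude $x_1,x_2$ lie in a common fibre, then cancels the nonzero factor $h(\zeta)$ and uses (1) to deduce $x_1=x_2$.

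I expect the argument to be largely bookkeeping once the fibration and the factorisation $\psi\circ f=g\circ\psi$ are in place; the one subtle point to handle carefully is that condition (2) implicitly forces $h$ to be nonzero on $\mu_d$ (otherwise $g$ could not even land in $\mu_d$), and this is what legitimises cancelling $h(\zeta)$ when comparing images within a single fibre. Beyond this, I do not anticipate any obstacle.
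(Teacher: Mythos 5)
Your argument is correct and is essentially the standard proof of this lemma (the multiplicative fibration of $\f_q^*$ over $\mu_d$ via $x\mapsto x^{(q-1)/d}$, the factorisation $\psi\circ f=g\circ\psi$, and the within-fibre/between-fibre analysis), which is exactly how it is proved in the cited source \cite{Zieve-2009}; the paper itself states the theorem without proof. The one point to keep explicit when writing it up is that nonvanishing of $h$ on $\mu_d$ (forced by condition (2)) is also what guarantees $f(\f_q^*)\subseteq\f_q^*$, so that injectivity on $\f_q^*$ together with $f(0)=0$ really yields a permutation of all of $\f_q$.
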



\section{A family of permutation trinomials}

In this section, we completely explain the permutation behaviour of the trinomial in result 4 when $p>3$. 

\begin{thm}
Let $p>3$ be an odd prime and $q=p^e$, where $e$ is a non-negative integer. Let $k$ be an integer such that $k\neq 0,2,4$ and $0\leq k\leq p-1$. Let 
$$f(x)=(4-k)x^{\frac{p^l+1}{2}}+kx^{\frac{p^l-1}{2}}+(2-k)x.$$

Then $f(x)$ is a PP of $\f_q$ if and only if $l=0$ and $k\neq 3$. 

\end{thm}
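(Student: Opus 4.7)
My plan is to attack the two directions of the biconditional separately. For the ``if'' direction I would dispose of the statement by a short computation: at $l=0$ the exponents $(p^l+1)/2$ and $(p^l-1)/2$ become $1$ and $0$, so $f(x)$ collapses to the affine polynomial $2(3-k)x+k$. Since $p>3$ forces $2\neq 0$ in $\f_q$, this permutes $\f_q$ precisely when the leading coefficient $2(3-k)$ is nonzero, i.e.\ when $k\neq 3$.

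For the ``only if'' direction I want to show that whenever $l\geq 1$ the polynomial $f$ fails to be injective already on a subfield of $\f_q$. Set $g=\gcd(l,e)$, so $\f_{p^g}\subseteq\f_q$. The identity
\[
\frac{p^l-1}{2}=\frac{p^g-1}{2}\bigl(1+p^g+p^{2g}+\cdots+p^{l-g}\bigr),
\]
in which the second factor is a sum of $l/g$ odd terms, gives $x^{(p^l-1)/2}=\chi_g(x)^{l/g}$ for every $x\in\f_{p^g}^\ast$, where $\chi_g$ is the quadratic character of $\f_{p^g}^\ast$. Combined with $f(0)=0$, this reduces the behaviour of $f|_{\f_{p^g}}$ to one of two shapes according to the parity of $l/g$. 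In the even sub-case $f$ acts on $\f_{p^g}^\ast$ as the affine map $x\mapsto 2(3-k)x+k$; for $k=3$ the whole group collapses to $\{3\}$, while for $k\neq 3$ both $0$ and $x_0=-k/(2(3-k))\in\f_p^\ast$ are sent to $0$, so $f$ is non-injective on $\f_{p^g}$. In the odd sub-case $f(x)=2(3-k)x+k$ on the nonzero squares $S_g^\ast$ and $f(x)=-2x-k$ on the non-squares $N_g$; again $k=3$ makes $f$ constant on $S_g^\ast$, and otherwise I would assume for contradiction that $f$ is injective on $\f_{p^g}$, so that $\{0\}$, $f(S_g^\ast)=2(3-k)S_g^\ast+k$ and $f(N_g)=-2N_g-k$ partition $\f_{p^g}$; avoidance of $0$ by the latter two forces $\chi_g(-k/(2(3-k)))=-1$ and $\chi_g(-k/2)=1$, whose quotient yields $\chi_g(3-k)=-1$.

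The final and hardest step is to derive a contradiction from this setup by producing a between-class collision $f(x)=f(y)$ with $x\in S_g^\ast$ and $y\in N_g$, algebraically equivalent to solving $y=(k-3)x-k$ with the prescribed quadratic characters. I would count such pairs via the standard Jacobsthal-type identity
\[
\sum_{t\in\f_{p^g}}\chi_g(t^2-kt)=-1\qquad(k\neq 0),
\]
which follows from the evaluation of a quadratic character sum over a quadratic polynomial with nonzero discriminant. A case split on the signs of $\chi_g(-1)$ and $\chi_g(k-3)$ gives a count of the shape $(p^g-c\pm 2\chi_g(k))/4$ with $c\in\{1,3\}$, which is strictly positive whenever $p>3$ and $k\neq 3$; the only numerically borderline situations ($p^g=5$ with $k\in\{2,3\}$) are already ruled out by the standing hypotheses $k\neq 2,3$. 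The resulting collision contradicts the assumed injectivity, so $f$ cannot be a PP of $\f_q$ for any $l\geq 1$, finishing the ``only if'' direction. The main obstacle is carrying out this four-case Jacobsthal-type bookkeeping cleanly so that strict positivity of the collision count is visible in every case.
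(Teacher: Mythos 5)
Your proposal is correct in substance, but it takes a genuinely different route from the paper in the hard case, and it is worth comparing the two. Both proofs dispose of the ``if'' direction and of the reduction to an affine map in the ``even'' situation identically (finding the nonzero root $-k/(2(3-k))$ of $2(3-k)x+k$, which collides with $f(0)=0$). The divergence is in the remaining case. The paper restricts $f$ to the prime field (noting $f(\f_p)\subseteq\f_p$), splits on the parity of $l$ reduced modulo $2e$, and in the odd case kills the problem with a single application of Hermite's criterion: the coefficient of $x^{p-1}$ in $f^2 \bmod (x^p-x)$ is $k^2\neq 0$, so $f$ cannot permute $\f_p$. You instead work over $\f_{p^{\gcd(l,e)}}$, split $f$ into two affine branches according to the quadratic character, and manufacture an explicit square/non-square collision by a Jacobsthal-type count. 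Your route is longer but constructive, and it has the side benefit of absorbing $k=3$ uniformly (constant on $\f_{p^g}^\ast$ or on the squares), whereas the paper treats $k=3$ as a separate case by evaluating at $1$, $-1$, and $4$. Two small points to fix if you write yours out: (i) the quadratic in your character sum is $(k-3)t^2-kt$, not $t^2-kt$, so the standard evaluation gives $-\chi_g(k-3)$ rather than $-1$ (this only shifts your sign bookkeeping, and your borderline check at $p^g=5$, $k=1$ still comes out positive); (ii) taking $g=\gcd(l,e)$ buys you nothing over $g=1$ --- restricting to $\f_p$ and using the parity of $l$ itself, as the paper does, gives the same dichotomy with less notation.
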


\begin{proof}
Assume $l=0$ and $k\neq 3$. 

Since $l=0$, $f(x)=2(3-k)x+k$. Since $k\neq 3$, clearly $f(x)$ is a PP of $\f_q$. 

Consider the following three cases. 

\textbf{Case 1.} $l=0$ and $k=3$

\textbf{Case 2.} $l\neq 0$ and $k=3$

\textbf{Case 3.} $l\neq 0$ and $k\neq 3$

Now we claim that $f(x)$ is not a PP of $\f_{p^e}$ in each case above. 

\textbf{Case 1.} Since $l=0$ and $k=3$, $f(x)=3$, which is clearly not a PP of $\f_{p^e}$.

\textbf{Case 2.} Let $l\neq 0$ and $k=3$. Then 

$$f(x)=x^{\frac{p^l+1}{2}}+3x^{\frac{p^l-1}{2}}-x.$$

Note that $f(1)=3$ and 

$$f(-1)=(-1)^{\frac{p^l+1}{2}}+3(-1)^{\frac{p^l-1}{2}}+1.$$

which implies

$$
f(-1) = \,\left\{
        \begin{array}{ll}
            -1 & , \frac{p^l+1}{2}\,\,\textnormal{is even},\\[0.3cm]
             3 & , \frac{p^l+1}{2}\,\,\textnormal{is odd}.
        \end{array}
    \right.
$$

Clearly, $f(x)$ is not a PP when $\frac{p^l+1}{2}$ is odd. 

When $\frac{p^l+1}{2}$ is even, we have $p^l+1\equiv 0 \pmod{4}$ and hence $p>5$. It is clear that $f(4)\equiv 3 \pmod{p}$ when $\frac{p^l+1}{2}$ is even. 

Hence $f(x)$ is not a PP of $\f_{p^e}$ in Case 2. 

\textbf{Case 3.} Let $l\neq 0$ and $k\neq 3$. Consider 

$$f(x)=(4-k)x^{\frac{p^l+1}{2}}+kx^{\frac{p^l-1}{2}}+(2-k)x.$$

Note that $f(0)=0$. Also note that $f(\f_p)\subseteq \f_p$. 

\textbf{Sub Case 3.1.} $l=(2n)e$, where $n\in \mathbb{Z}^+$. Then we have

$$\displaystyle\frac{p^{l}+1}{2}=\displaystyle\frac{p^{(2n)e}+1}{2}=\displaystyle\frac{(p^{ne}-1)(p^{ne}+1)}{2}+1\equiv 1 \pmod{p^e-1},$$

which implies 

$$f(x)=(4-k)x^{\frac{p^l+1}{2}}+kx^{\frac{p^l-1}{2}}+(2-k)x\equiv 2(3-k)x+kx^{p^e-1} \pmod{x^{p^e}-x}.$$

Since 

$$2(k-3)x\equiv k \pmod{p}$$

has a non-zero solution, there exists a non-zero $x\in \f_p$ such that $f(x)=0$. Hence $f(x)$ is not a PP of $\f_{p^e}$.

(or by Hermite's criterion, $f(x)$ is not a PP of $\f_{p^e}$ since the degree of $f(x) > p^e-2$).

\textbf{Sub Case 3.2.}  $l\neq (2n)e$, where $n\in \mathbb{Z}^+$. Note that here we only need to consider $1\leq l\leq 2e-1$ since 

$$\displaystyle\frac{p^{2e+i}+1}{2} \equiv \displaystyle\frac{p^{i}+1}{2} \pmod{p^e-1}\,\,\,\,\textnormal{and}\,\,\,\,\displaystyle\frac{p^{2e+i}-1}{2} \equiv \displaystyle\frac{p^{i}-1}{2} \pmod{p^e-1},$$

which imply

$$(4-k)x^{\frac{p^{2e+i}+1}{2}}+kx^{\frac{p^{2e+i}-1}{2}}+(2-k)x\equiv (4-k)x^{\frac{p^{i}+1}{2}}+kx^{\frac{p^{i}-1}{2}}+(2-k)x\pmod{x^{p^e}-x}.$$

So, let $1\leq l\leq 2e-1$ and consider 

$$f(x)=(4-k)x^{\frac{p^l+1}{2}}+kx^{\frac{p^l-1}{2}}+(2-k)x.$$

Let $x\in \f_p^*$ and $l$ be even. Then 

\[
\begin{split}
f(x)&=(4-k)x^{\frac{p^l-1}{2}+1}+kx^{\frac{p^l-1}{2}}+(2-k)x \cr
&=2(3-k)x+k.
\end{split}
\]

Since 

$$2(k-3)x\equiv k \pmod{p}$$

has a non-zero solution, there exists a non-zero $x\in \f_p$ such that $f(x)=0$. Hence $f(x)$ is not a PP of $\f_{p^e}^*$.

Assume that $l$ is odd. Then clearly for $x\in \f_{p^e}\setminus \f_p$, $f(x)\not \in \f_p$. 

Since $l$ is odd, for $x\in \f_p$ we have 

$$f(x)=(4-k)x^{\frac{p+1}{2}}+kx^{\frac{p-1}{2}}+(2-k)x$$

which implies

$$f(x)^2 = k^2x^{p-1} +\,\, \textnormal{terms with lower degree}.$$

By Hermite's criterion, $f(x)$ does not permute $\f_p$. Hence $f(x)$ is not a PP of $\f_{p^e}$.

This completes the proof. 

\end{proof}


\section{A family of permutation binomials}

In this section, we completely explain the permutation behaviour of the trinomial in result 4 when $p=3$. 

Let $p=3$. Since $k\neq 0,2$, we have $k=1$. Then 
$$f(x)=(4-k)x^{\frac{p^l+1}{2}}+kx^{\frac{p^l-1}{2}}+(2-k)x=x^{\frac{p^l-1}{2}}+x.$$

\begin{thm}
Let $p=3$ and $q=3^e$, where $e$ is a non-negative integer. Let 
$$f(x)=x^{\frac{p^l-1}{2}}+x.$$

Then $f(x)$ is a PP of $\f_q$ if and only if 
\begin{itemize}
\item [(i)]  $l=0$, or
\item [(ii)] $l=me+1$, where $m$ is a non-negative even integer.
\end{itemize}

\end{thm}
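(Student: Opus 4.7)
The plan is to reduce the exponent of $f$ modulo $q-1$ and then apply Theorem~\ref{L1} (Zieve's criterion). Writing $l = ne + r$ with $0 \le r < e$ and using $3^{ne} - 1 = (3^e - 1)(1 + 3^e + \cdots + 3^{(n-1)e})$ together with a short parity count (each of the $n$ summands is odd), one obtains
\[
\frac{3^l - 1}{2} \equiv \frac{3^r - 1}{2} + \varepsilon \cdot \frac{3^e - 1}{2} \pmod{3^e - 1},
\]
where $\varepsilon \equiv n \pmod{2}$. Setting $A := \tfrac{3^r - 1}{2} + \varepsilon \tfrac{3^e - 1}{2}$, this gives $f(x) \equiv x^A + x \pmod{x^q - x}$ on $\f_q^*$, with $f(0) = 0$ whenever $l \ge 1$.

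For the sufficiency direction, (i) is immediate since $f(x) = 1 + x$ is a PP. In case (ii), where $l = me + 1$ with $m$ non-negative even, I would check that the reduction above always yields $A = 1$: when $e \ge 2$ the decomposition has $n = m$ even and $r = 1$, whence $A = 1$; when $e = 1$ one instead gets $n = m + 1$ odd and $r = 0$, so $\varepsilon = 1$ and $A = \tfrac{3-1}{2} = 1$. In either case $f(x) \equiv 2x \pmod{x^q - x}$, a PP.

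For necessity, I would treat the remaining residues of $l$ modulo $2e$ (equivalently, the remaining values of $A$) and show $f$ is not a PP. The case $A = 0$ (where $n$ is even, $r = 0$, and $n \ge 2$) is handled by noting that $f$ reduces to $x^{q-1} + x$, so $f(-1) = 0 = f(0)$. For $A \ge 2$, I would apply Theorem~\ref{L1} to $f(x) = x(1 + x^{A-1})$, with $d = (q-1)/\gcd(A-1, q-1)$; whenever $d$ is even, the subgroup of $(A-1)$-th powers of $\f_q^*$ contains $-1$, so some nonzero $y$ satisfies $y^{A-1} = -1$ and thus $f(y) = 0 = f(0)$.

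The main obstacle will be the residual subcase where $d$ is odd (equivalently, $v_2(A-1) \ge v_2(q-1)$): no nonzero root of $f$ exists, so the simple witness fails. In this subcase I would study the companion map $\psi(y) = y(1 + y^k)^d$ on $\mu_d$ (with $k = (A-1)/\gcd(A-1, q-1)$), observing that $d \mid (q-1)/2$ here so $\mu_d$ lies inside the squares of $\f_q^*$. A standard Jacobi-sum estimate of the form $\sum_{y \in \mu_d} \chi(1 + y^k) = O(\sqrt{q})$ should then produce some $y \in \mu_d$ with $1 + y^k$ a non-square; combined with $\chi(-1) = -1$ when $e$ is odd, this forces $\psi(y) \notin \mu_d$, contradicting condition~(2) of Theorem~\ref{L1}.
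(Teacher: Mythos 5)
Your reduction of the exponent modulo $q-1$ is correct and in fact more systematic than the paper's: the paper's necessity argument only ever examines $l=0$ and $l=me+1$, whereas your decomposition $l=ne+r$ forces you to confront every residue of $l$, i.e.\ every exponent $A=\tfrac{3^r-1}{2}+\varepsilon\tfrac{3^e-1}{2}$. The sufficiency direction and the cases $A=0$ and $d$ even are fine. The genuine gap is the residual subcase with $d$ odd. It is nonempty --- for example $e=3$, $l=3$ gives $f(x)\equiv x^{13}+x$ on $\f_{27}$, where $A-1=12$, $\gcd(12,26)=2$, $d=13$ --- and the argument you sketch for it cannot work as stated. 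When $d$ is odd, $-1\notin\mu_{d/\gcd(k,d)}$, so $1+y^k\neq 0$ for every $y\in\mu_d$, and then $\psi(y)^d=y^d(1+y^k)^{q-1}=1$: the map $\psi$ sends $\mu_d$ into $\mu_d$ regardless of the quadratic character of $1+y^k$. Producing $y\in\mu_d$ with $1+y^k$ a non-square therefore does not yield $\psi(y)\notin\mu_d$, and the contradiction you aim for never materializes. (Two smaller slips: the exponent in condition (2) of Theorem~\ref{L1} is $(q-1)/d=\gcd(A-1,q-1)$, not $d$; and $e$ need not be odd in the residual case, e.g.\ $e=6$, $r=5$, $\varepsilon=0$ gives $A=121$ and $d=91$.)

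What must be shown in that subcase is that $\psi$ fails to be \emph{injective} on $\mu_d$, equivalently that $f$ has a collision even though $0$ is its only root. For instance, when $r=0$, $\varepsilon=1$ and $e$ is odd one has $A=\tfrac{q-1}{2}$, so $f(x)=x+\chi(x)$ on $\f_q^*$ with $\chi$ the quadratic character, and $f(x)=f(x-1)$ whenever $x$ is a nonzero square with $x-1$ a nonzero non-square; an exact Jacobsthal-sum count gives $\tfrac{q-3}{4}>0$ such $x$ for $q>3$. Some computation of this kind (an exact evaluation is preferable to an $O(\sqrt q)$ bound, which would leave small $q$ unexamined) is needed for each residual exponent, and this is real work your outline does not yet contain. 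For comparison, the paper avoids the issue entirely by restricting its necessity argument to $l=me+1$ with $m$ odd, where the reduced exponent is either even (so $f(-1)=0=f(0)$) or equals $\tfrac{3^e-1}{2}+1$ (so $d=2$ and the root $-1$ of $h$ violates condition (2) of Theorem~\ref{L1}); it never addresses the other residues of $l$ modulo $e$, so once you repair the residual case your write-up would actually be more complete than the published proof.
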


\begin{proof}
When $l=0$, $f(x)=x+1$ which is a PP of $\f_q$.

Let $l=me+1$, where $m$ is a non-negative even integer. Note that since $m$ is a non-negative even integer, we have
$$\displaystyle\frac{3^{me+1}-1}{2}=\displaystyle\frac{3^{me}-1}{2}+3^{me}=\displaystyle\frac{(3^{\frac{me}{2}}-1)(3^{\frac{me}{2}}+1)}{2}+3^{me}\equiv 1 \pmod{3^e-1}.$$

So, when $l=me+1$, where $m$ is a non-negative even integer, $f(x)\equiv 2x \pmod{x^{3^e}-x}$ which is clearly a PP of $\f_{3^e}$.

Now assume that $l\neq 0$ and $l=me+1$, where $m$ is a non-negative odd integer.

$$
\displaystyle\frac{3^{me+1}-1}{2} \pmod{3^e-1} \,\,\,\,\textnormal{is}\,\,\,\, \left\{
        \begin{array}{ll}
            \textnormal{even} & , \textnormal{e is odd},\\[0.3cm]
            \textnormal{odd} & , \textnormal{e is even}.
        \end{array}
    \right.
$$

If $\displaystyle\frac{3^{me+1}-1}{2} \pmod{3^e-1}$ is even, then $f(0)=0=f(-1)$ which implies $f(x)$ is not a PP of $\f_q$. 

Now consider the case where $\displaystyle\frac{3^{me+1}-1}{2} \pmod{3^e-1}$ is odd. Note that in this case $e$ is even. Also, 

$\displaystyle\frac{3^{me+1}-1}{2} \equiv \displaystyle\frac{3^e-1}{2}+1 \pmod{3^e-1}$. Then 
$$f(x)=x^{\frac{3^{me+1}-1}{2}}+x \equiv x^{\frac{3^e-1}{2}+1}+x\pmod{x^{3^e}-x}.$$

Now we claim that $x^{\frac{3^e-1}{2}+1}+x$ is not a PP of $\f_{3^e}$. 

$$x^{\frac{3^e-1}{2}+1}+x=x(x^{\frac{3^e-1}{2}}+1)=x\,h(x^{\frac{3^e-1}{2}}),$$

where $h(x)=x+1$. $\textnormal{gcd}(1, \frac{3^e-1}{2})=1$, but $-1 \in \mu_2$ and $h(-1)=0$. So $x\,h(x)^{\frac{3^e-1}{2}}$ does not permute $\mu_2$. Then by Theorem~\ref{L1}, $f(x)$ is not a PP of $\f_{3^e}$. 

This completes the proof. 

\end{proof}


\end{document}